\newcommand{\R}{\mathbb{R}}
\newcommand{\liealg}[1]{\mathfrak{#1}}
\newcommand{\skal}[2]{\langle{#1},{#2}\rangle}
\newtheorem{theorem}{Theorem}[section]
\newtheorem{lemma}[theorem]{Lemma}
\newtheorem{proposition}[theorem]{Proposition}
\newtheorem{corollary}[theorem]{Corollary}
\theoremstyle{definition}
\newtheorem{definition}[theorem]{Definition}
\theoremstyle{remark}
\newtheorem{remark}[theorem]{Remark}
\numberwithin{equation}{section}
\begin{document}
%
\title[Normal holonomy of spacelike submanifolds]{On the normal holonomy representation of spacelike submanifolds in pseudo-Riemannian space forms}
%
%
\author{Kordian L\"{a}rz}
\address{Kordian L\"{a}rz,\newline
	\indent Humboldt-Universit\"{a}t Berlin,\newline
	\indent Institut f\"{u}r Mathematik,\newline
	\indent Rudower Chaussee 25,\newline
	\indent D-12489 Berlin.}
\email{laerz@math.hu-berlin.de}
\subjclass[2000]{53C29, 53C50}
\keywords{pseudo-Riemannian space forms, submanifolds, normal holonomy group}
%
%
%
%
%
\begin{abstract}
	In this paper we study weakly irreducible holonomy representations of the normal connection of a spacelike submanifold in
	a pseudo-Riemannian space from. We associate screen representations to weakly irreducible normal holonomy groups and classify the screen representations having the Borel-Lichn\'erowicz property.
	In particular, we derive a classification of Lorentzian normal holonomy representations.
\end{abstract}
\maketitle
%
\section{Introduction}
Any spacelike submanifold $M \subset \tilde{M}$ in a pseudo-Riemannian space form\footnote{In fact, throughout this paper we may substitute space forms
													by arbitrary pseudo-Riemannian spaces of constant curvature}
$\tilde{M}$ induces an orthogonal decomposition $T\tilde{M}|_{M} = TM \oplus NM$. The projections define the induced connection $\nabla^{\perp}$ on the normal bundle $NM$ by
\begin{equation*}
		\nabla^{\tilde{M}}_{X}\xi = -A_{\xi}X + \nabla^{\perp}_{X}\xi
							\quad \text{for }\xi \in \Gamma(NM),~X \in \Gamma(TM),
\end{equation*}
where $A_{\xi}X:=-pr_{TM}(\nabla^{\tilde{M}}_{X}\xi)$ is the shape operator of $M \subset \tilde{M}$.
Using $\nabla^{\perp}$ we derive a (restricted) holonomy representation which we call the normal holonomy representation and whose Lie algebra is denoted by
$\liealg{hol}^{\perp} \subset \liealg{so}(N_{p}M)$.
In \cite{MR1023346} irreducible normal holonomy representations of submanifolds in Euclidean space forms have been shown to act as the isotropy representation of a semisimple Riemannian symmetric space.
\par
$\liealg{hol}^{\perp}$ is said to act weakly irreducible if all invariant proper subspaces are degenerate. If $A \subset N_{p}M$ is
invariant then $A^{\perp}$ is invariant. Hence, $\Xi :=A \cap A^{\perp}$ is an invariant, isotropic subspace of $N_{p}M$. Moreover, we conclude
$\liealg{hol}^{\perp} \subset \mbox{Stab}_{\liealg{so}(N_{p}M)}(\Xi)$. In the following we consider normal holonomy representations where $\Xi$ has
maximal dimension. We identify $N_{p}M$ with $\R^{p,q+p}$ by choosing a pseudo-orthonormal basis
$(v_{1},\ldots,v_{p},e_{1},\ldots,e_{q},w_{1},\ldots,w_{p})$ of $N_{p}M$ where $\Xi = \text{span}(v_{1},\ldots,v_{p})$, $\Xi^{\perp} = \text{span}(v_{1},\ldots,v_{p},e_{1},\ldots,e_{q})$ and
\begin{equation*}
	\skal{v_{i}}{v_{j}} = \skal{w_{1}}{w_{p}} = \skal{v_{i}}{e_{j}} = \skal{w_{i}}{e_{j}} = 0, \quad \skal{e_{i}}{e_{j}} = \skal{v_{i}}{w_{j}} =\delta_{ij}.
\end{equation*}
With respect to this basis we derive the identification
\begin{equation*}
	\mbox{Stab}_{\liealg{so}(N_{p}M)}(\Xi) = \left\{ \begin{pmatrix}
									 & -X_{1}^{T} & \\
									A & \vdots & * \\
									 & -X_{p}^{T} & \\
									0 & B & X_{1} \cdots X_{p}\\
									0 & 0 & -A^{T}
								\end{pmatrix}: \begin{matrix}
											A \in \liealg{gl}(p),\\
											B \in \liealg{so}(q),\\
											X_{j} \in \R^{q}
										\end{matrix}\right\}.
\end{equation*}
The projection $pr_{\liealg{so}(q)}:\mbox{Stab}_{\liealg{so}(N_{p}M)}\rightarrow \liealg{so}(q)$ defines the normal screen holonomy algebra $\liealg{g}:=pr_{\liealg{so}(q)}(\liealg{hol}^{\perp})$.
In this paper we classify the representations of $\liealg{g}$ having the Borel-Lichn\'erowicz property (see below). In particular, if $\liealg{g}$ acts irreducibly we show the connected Lie subgroup
$G \subset SO(q)$ with $Lie(G)=\liealg{g}$ to act as the isotropy representation of a semisimple Riemannian symmetric space. As an application we derive a classification of all normal holonomy
representations of spacelike submanifolds in Lorentzian space forms.
\section{On the classification of normal screen holonomy representations}
Define the curvature tensor $R^{\perp}:\Gamma(TM) \times \Gamma(TM) \times \Gamma(NM) \rightarrow \Gamma(NM)$ of $\nabla^{\perp}$ by
$R^{\perp}(X,Y):=[\nabla_{X}^{\perp},\nabla_{Y}^{\perp}]-\nabla_{[X,Y]}^{\perp}$. The key observation is that associated to $R^{\perp}$ there is an algebraic curvature tensor $\mathcal{R}$ on $NM$
generating the same endomorphism of $NM$. This idea has already been used in \cite{MR1023346}. Its generalization to the pseudo-Riemannian case is straightforward since the Ricci equation
\begin{equation*}
	\skal{R^{\perp}(X_{1},X_{2})\xi_{1}}{\xi_{2}} = \skal{[A_{\xi_{1}},A_{\xi_{2}}]X_{1}}{X_{2}}
\end{equation*}
and the self-adjointness of $A_{\xi}$ are still obeyed. Hence, we state it without proof:
\begin{lemma}\label{olmos-curvature}\par \noindent
	\begin{enumerate}
		\item \noindent
		Let $(e_{1},\ldots,e_{\dim M})$ be an orthonormal basis of $T_{p}M$. Then
		\begin{equation*}
			\mathcal{R}_{p}(\xi_{1},\xi_{2})\xi_{3} := \sum_{i=1}^{\dim M}{R^{\perp}_{p}(A_{\xi_{1}}e_{i},A_{\xi_{2}}e_{i})\xi_{3}}
		\end{equation*}
		is an algebraic curvature tensor on $N_{p}M$, i.e.,
		\begin{align*}
			\mathcal{R}(\xi_{1},\xi_{2}) &=-\mathcal{R}(\xi_{2},\xi_{1}),\\
			\skal{\mathcal{R}(\xi_{1},\xi_{2})\xi_{3}}{\xi_{4}} &=-\skal{\xi_{3}}{\mathcal{R}(\xi_{1},\xi_{2})\xi_{4}},\\
			\skal{\mathcal{R}(\xi_{1},\xi_{2})\xi_{3}}{\xi_{4}} &= \skal{\mathcal{R}(\xi_{3},\xi_{4})\xi_{1}}{\xi_{2}},\\
			\mathcal{R}(\xi_{1},\xi_{2})\xi_{3} &+ \mathcal{R}(\xi_{2},\xi_{3})\xi_{1} +\mathcal{R}(\xi_{3},\xi_{1})\xi_{2}=0.
		\end{align*}
		Moreover, $\skal{\mathcal{R}(\xi_{1},\xi_{2})\xi_{3}}{\xi_{4}} =
					-\frac{1}{2}Tr([A_{\xi_{1}},A_{\xi_{2}}] \circ [A_{\xi_{3}},A_{\xi_{4}}])$.
		\item \noindent
		$\mbox{span}\{\mathcal{R}_{p}(\xi_{1},\xi_{2}): \xi_{1},\xi_{2} \in N_{p}M \}
				= \mbox{span}\{ R^{\perp}_{p}(X,Y): X,Y \in T_{p}M \}$.
	\end{enumerate}
\end{lemma}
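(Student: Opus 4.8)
The plan is to reduce both parts of the lemma to trace identities for the shape operators. Write $A_j:=A_{\xi_j}$ and fix an orthonormal basis $(e_i)$ of $T_pM$. First I would feed the Ricci equation into the definition of $\mathcal R_p$: applied to each summand, together with the self-adjointness of the $A_j$, it gives
\[
  \skal{\mathcal R_p(\xi_1,\xi_2)\xi_3}{\xi_4}
     = \sum_i \skal{[A_3,A_4]A_1e_i}{A_2e_i}
     = \mathrm{Tr}\bigl(A_1A_2[A_3,A_4]\bigr).
\]
Since $C:=[A_3,A_4]$ is skew-adjoint while $A_1,A_2$ are self-adjoint one has $\mathrm{Tr}(A_1A_2C)=\tfrac12\mathrm{Tr}([A_1,A_2]C)$, so this becomes the closed trace formula $\skal{\mathcal R_p(\xi_1,\xi_2)\xi_3}{\xi_4}=\tfrac12\mathrm{Tr}([A_1,A_2][A_3,A_4])$ (the global sign depending on the conventions chosen for $R^\perp$ and the Ricci equation). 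From this formula the four algebraic identities are essentially automatic: antisymmetry in $(\xi_1,\xi_2)$ and skew-adjointness in $(\xi_3,\xi_4)$ hold already because $R^\perp_p$ is an $\liealg{so}(N_pM)$-valued alternating $2$-form; the pair symmetry is cyclicity of the trace; and the first Bianchi identity, after rewriting $\mathrm{Tr}([A_1,A_2][A_3,A_4])=\mathrm{Tr}\bigl(A_1[A_2,[A_3,A_4]]\bigr)$ and permuting cyclically, is exactly the Jacobi identity in $\liealg{gl}(T_pM)$. Nondegeneracy of the metric on $N_pM$ then promotes these scalar identities to the stated endomorphism identities.

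For part (2) the inclusion $\mathrm{span}\{\mathcal R_p(\xi_1,\xi_2)\}\subseteq\mathrm{span}\{R^\perp_p(X,Y)\}$ is clear from the definition. For the converse I would introduce the linear map
\[
  \mu:\Lambda^2T_pM\longrightarrow\liealg{so}(N_pM),\qquad
     \skal{\mu(\omega)\xi}{\eta}:=\skal{\omega}{[A_\xi,A_\eta]},
\]
where $\Lambda^2T_pM$ carries the positive definite inner product induced by the Euclidean metric of the spacelike tangent space and $[A_\xi,A_\eta]\in\liealg{so}(T_pM)\cong\Lambda^2T_pM$. The Ricci equation says precisely that $R^\perp_p(X,Y)=\mu(X\wedge Y)$, while by construction $\mathcal R_p(\xi_1,\xi_2)=\mu\bigl(\sum_iA_{\xi_1}e_i\wedge A_{\xi_2}e_i\bigr)$; thus both spans are images under the single map $\mu$. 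Setting $W:=\mathrm{span}\{[A_\xi,A_\eta]:\xi,\eta\in N_pM\}\subseteq\liealg{so}(T_pM)$, one checks $\ker\mu=W^{\perp}$, so $\mathrm{span}\{R^\perp_p(X,Y)\}=\mu(\Lambda^2T_pM)=\mu(W)$ and $\mu|_W$ is injective.

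It then remains to show that the elements $\Theta(\xi_1,\xi_2):=\sum_iA_{\xi_1}e_i\wedge A_{\xi_2}e_i$ span $W$ modulo $W^{\perp}$; equivalently, that no nonzero $P\in W$ is orthogonal to every $\Theta(\xi_1,\xi_2)$. Here $\skal{P}{\Theta(\xi_1,\xi_2)}=\mathrm{Tr}(A_{\xi_1}A_{\xi_2}P)$, so vanishing for all $\xi_1,\xi_2$ forces $\mathrm{Tr}([A_{\xi_1},A_{\xi_2}]P)=0$ for all $\xi_1,\xi_2$, i.e.\ $P\perp W$; as the inner product is definite and $P\in W$, this gives $P=0$. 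Applying $\mu$ yields $\mathrm{span}\{\mathcal R_p\}=\mu(W)=\mathrm{span}\{R^\perp_p\}$. I expect this last point to be the only genuine obstacle: arguing by orthogonal complements directly inside $\liealg{so}(N_pM)$ fails, since its trace form is indefinite when the normal space is not spacelike, so the device that makes everything work is transporting the problem along $\mu$ into $\liealg{so}(T_pM)$, where the metric is definite and $W\oplus W^{\perp}=\liealg{so}(T_pM)$; the remainder is routine trace bookkeeping.
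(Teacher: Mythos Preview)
The paper does not actually give a proof of this lemma: it cites Olmos and remarks that the pseudo-Riemannian generalization is ``straightforward'' once one has the Ricci equation and the self-adjointness of the $A_\xi$, and then states the result without proof. Your argument is correct and fills in precisely those details. The trace formula and the derivation of the four curvature identities from it (pair symmetry $=$ cyclicity of the trace, Bianchi $=$ Jacobi) are standard; your device in part~(2) of pushing everything through the linear map $\mu$ into $\liealg{so}(T_pM)$ is exactly the right move, since the spacelikeness of $M$ makes the trace form there positive definite and hence lets the orthogonal-complement argument go through even when $N_pM$ is indefinite. You are also right to flag the sign: with the Ricci equation as displayed in the paper one obtains $+\tfrac12\,\mathrm{Tr}([A_1,A_2][A_3,A_4])$ rather than $-\tfrac12$, so one of the two formulas carries a convention slip; this is harmless for the applications, where only $\mathrm{scal}(R_j)\neq 0$ is used.
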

Let $\mathcal{R}^{\tau_{\gamma}}(\xi_{1},\xi_{2}):=
	\tau_{\gamma}^{-1} \circ \mathcal{R}_{\gamma(1)}(\tau_{\gamma}(\xi_{1}),\tau_{\gamma}(\xi_{2}))
											\circ \tau_{\gamma}$, where
$\tau_{\gamma}$ denotes the parallel displacement with the normal connection $\nabla^{\perp}$ along the piecewise smooth
curve $\gamma:[0,1] \rightarrow M$ with $\gamma(0)=p$.
Using the Ambrose-Singer theorem we conclude
\begin{equation*}
	\liealg{hol}^{\perp}_{p} = \mbox{span}\{\mathcal{R}^{\tau_{\gamma}}(\xi_{1},\xi_{2}): \xi_{1},\xi_{2} \in N_{p}M,~\gamma(0)=p\}.
\end{equation*}
Using the basis $(v_{1},\ldots,v_{p},e_{1},\ldots,e_{q},w_{1},\ldots,w_{p})$ of $N_{p}M$ and the identification from the introduction we observe
\begin{equation*}
	\liealg{g} = \mbox{span}\{pr_{\mathcal{E}} \circ \mathcal{R}^{\tau_{\gamma}}(\xi_{1},\xi_{2})|_{\mathcal{E}}: \xi_{1},\xi_{2} \in N_{p}M \},
\end{equation*}
where $\mathcal{E}:=\mbox{span}\{e_{1},\ldots,e_{q}\}$. For $1 \leq i,j \leq p$ we define
\begin{align*}
	\mathcal{P}_{0} &:= pr_{\mathcal{E}} \circ \mathcal{R}^{\tau_{\gamma}}|_{\mathcal{E} \times \mathcal{E} \times \mathcal{E}} \in \Lambda^{2}\mathcal{E}^{*} \otimes \liealg{g},\\
	\mathcal{P}_{i} &:= pr_{\mathcal{E}} \circ \mathcal{R}^{\tau_{\gamma}}(w_{i},\cdot)|_{\mathcal{E} \times \mathcal{E}} \in \mathcal{E}^{*} \otimes \liealg{g},\\
	\mathcal{Q}_{ij} &:= pr_{\mathcal{E}} \circ \mathcal{R}^{\tau_{\gamma}}(w_{i},w_{j})|_{\mathcal{E}} \in \liealg{g}.
\end{align*}
\begin{definition}
	Let $\liealg{h} \subset \liealg{so}(\mathcal{E},h)$ for some Euclidean vector space $(\mathcal{E},h)$.
	\begin{enumerate}
		\item \noindent
		The space of algebraic curvature tensors with values in $\liealg{h}$ is given by
		\begin{equation*}
			\mathcal{K}(\liealg{h}) := \{R \in \Lambda^{2}\mathcal{E}^{*} \otimes \liealg{h}: R(x,y)z + R(y,z)x + R(z,x)y =0 \}.
		\end{equation*}
		\item \noindent
		The space of algebraic weak curvature tensors with values in $\liealg{h}$ is given by
		\begin{equation*}
			\mathcal{B}_{h}(\liealg{h}) := \{Q \in \mathcal{E}^{*} \otimes \liealg{h}: h(Q(x)y,z) + h(Q(y)z,x) + h(Q(z)x,y) =0\}.
		\end{equation*}
	\end{enumerate}
\end{definition}
We will need the following simple observation:
\begin{proposition}\label{curv-decomp}
	$\liealg{g} = \mbox{span}\{ \mathcal{P}_{0}(Y_{1},Y_{2}),\mathcal{P}_{k}(Y_{k}),\mathcal{Q}_{ij}: Y_{\cdot} \in \mathcal{E}\}$.
\end{proposition}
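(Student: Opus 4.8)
The plan is to expand an arbitrary generator $pr_{\mathcal{E}}\circ\mathcal{R}^{\tau_{\gamma}}(\xi_1,\xi_2)|_{\mathcal{E}}$ of $\liealg{g}$ in the basis $(v_1,\dots,v_p,e_1,\dots,e_q,w_1,\dots,w_p)$ of $N_pM$ and to check that every term which is not already listed on the right-hand side drops out. Each of $\mathcal{P}_0(Y_1,Y_2)$, $\mathcal{P}_k(Y_k)$ and $\mathcal{Q}_{ij}$ is by its very definition the $\liealg{so}(q)$-component of some $\mathcal{R}^{\tau_{\gamma}}(\xi_1,\xi_2)\in\liealg{hol}^{\perp}$, so that the claimed spanning set lies in $\liealg{g}$ and only the reverse containment needs work. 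Since $\mathcal{R}^{\tau_{\gamma}}$ is $\R$-bilinear and antisymmetric in its first two arguments, $\liealg{g}$ is spanned by the operators $pr_{\mathcal{E}}\circ\mathcal{R}^{\tau_{\gamma}}(\eta_1,\eta_2)|_{\mathcal{E}}$ with $\eta_1,\eta_2$ running through this basis and $\gamma$ through the loops at $p$. If $\eta_1,\eta_2\in\mathcal{E}$ this operator is $\mathcal{P}_0(\eta_1,\eta_2)$; if exactly one of them lies in $\mathcal{E}$ and the other equals some $w_i$ it is $\pm\mathcal{P}_i(\eta)$ with $\eta\in\mathcal{E}$; if $\{\eta_1,\eta_2\}=\{w_i,w_j\}$ it is $\mathcal{Q}_{ij}$. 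By the multilinearity of $\mathcal{P}_0$ and of the $\mathcal{P}_k$ these cases reproduce exactly the asserted spanning set, so it remains to show that $pr_{\mathcal{E}}\circ\mathcal{R}^{\tau_{\gamma}}(\eta_1,\eta_2)|_{\mathcal{E}}$ vanishes whenever at least one of $\eta_1,\eta_2$ equals some $v_i\in\Xi$ (the case $p=0$ being vacuous).

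Two inputs drive the vanishing. First, the algebraic identities of Lemma~\ref{olmos-curvature} hold verbatim for $\mathcal{R}^{\tau_{\gamma}}$, since it is the conjugate of $\mathcal{R}_{\gamma(1)}$ by the linear isometry $\tau_{\gamma}$. Second, $\mathcal{R}^{\tau_{\gamma}}(\eta_1,\eta_2)\in\liealg{hol}^{\perp}\subset\mbox{Stab}_{\liealg{so}(N_pM)}(\Xi)$, so it preserves both $\Xi$ and $\Xi^{\perp}=\Xi\oplus\mathcal{E}$, while $\Xi$ is isotropic and $\Xi\perp\mathcal{E}$. For $x,y\in\mathcal{E}$ the pair symmetry gives $\skal{\mathcal{R}^{\tau_{\gamma}}(v_i,\eta)x}{y}=\skal{\mathcal{R}^{\tau_{\gamma}}(x,y)v_i}{\eta}$ with $\mathcal{R}^{\tau_{\gamma}}(x,y)v_i\in\Xi$; this pairing therefore vanishes for $\eta=v_j$ and for $\eta\in\mathcal{E}$, which disposes of the pairs $(v_i,v_j)$ and $(v_i,e_k)$.

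The remaining case $(v_i,w_j)$ is the genuinely delicate one, since $\skal{v_k}{w_j}=\delta_{kj}$ blocks the same shortcut. Here I would feed the arguments $v_i,w_j,e_k$ into the first Bianchi identity of Lemma~\ref{olmos-curvature} for $\mathcal{R}^{\tau_{\gamma}}$ and pair the outcome with $e_l$:
\begin{equation*}
	\skal{\mathcal{R}^{\tau_{\gamma}}(v_i,w_j)e_k}{e_l}
	= -\skal{\mathcal{R}^{\tau_{\gamma}}(w_j,e_k)v_i}{e_l}
	  -\skal{\mathcal{R}^{\tau_{\gamma}}(e_k,v_i)w_j}{e_l}.
\end{equation*}
The first term on the right is $0$ because $\mathcal{R}^{\tau_{\gamma}}(w_j,e_k)v_i\in\Xi$ and $\Xi\perp\mathcal{E}\ni e_l$; the second term, rewritten by pair symmetry as $\skal{\mathcal{R}^{\tau_{\gamma}}(w_j,e_l)e_k}{v_i}$, vanishes because $\mathcal{R}^{\tau_{\gamma}}(w_j,e_l)$ preserves $\Xi^{\perp}$, hence $\mathcal{R}^{\tau_{\gamma}}(w_j,e_l)e_k\in\Xi^{\perp}\perp\Xi\ni v_i$. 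Thus $\skal{\mathcal{R}^{\tau_{\gamma}}(v_i,w_j)e_k}{e_l}=0$ for all $k,l$, and since $\mathcal{E}$ is non-degenerate this means $pr_{\mathcal{E}}\circ\mathcal{R}^{\tau_{\gamma}}(v_i,w_j)|_{\mathcal{E}}=0$. Combining the three vanishing statements with the identification of the surviving terms finishes the proof. The hard part is precisely this last computation: it is the only step where the first Bianchi identity is indispensable, whereas everywhere else $\Xi$-invariance together with the pair symmetry of $\mathcal{R}^{\tau_{\gamma}}$ already suffices.
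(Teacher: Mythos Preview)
Your proof is correct and follows essentially the same route as the paper: both expand $\mathcal{R}^{\tau_{\gamma}}(\xi_1,\xi_2)$ bilinearly, kill the $(v_i,v_j)$ and $(v_i,e_k)$ contributions via the pair symmetry together with $\Xi$-invariance, and handle the $(v_i,w_j)$ term by the first Bianchi identity combined with invariance of $\Xi$ and $\Xi^{\perp}$. One cosmetic slip: $\gamma$ should range over piecewise smooth curves with $\gamma(0)=p$, not just loops, since Ambrose--Singer requires curvature at all points reached from $p$; this does not affect your argument.
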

\begin{proof}
	Fix $\xi_{1},\xi_{2} \in N_{p}M$ and $E \in \mathcal{E}$. Then we have
	\begin{equation*}
		pr_{\mathcal{E}}(\mathcal{R}^{\tau_{\gamma}}(\xi_{1},\xi_{2})E)
			= \sum_{k=1}^{q}{\skal{\mathcal{R}^{\tau_{\gamma}}(\xi_{1},\xi_{2})E}{e_{k}}e_{k}}.
	\end{equation*}
	For $i \in \{1,2\}$ we may write $\xi_{i}= \alpha^{j}_{i}v_{j} + Y_{i} + \beta^{j}_{i}Z_{j}$ with $Y_{i} \in \mathcal{E}$. Then
	\begin{align*}
		\skal{\mathcal{R}^{\tau_{\gamma}}(\xi_{1},\xi_{2})E}{e_{k}} &= \skal{\mathcal{R}(\tau_{\gamma}(\xi_{1}),\tau_{\gamma}(\xi_{2}))\tau_{\gamma}(E)}{\tau_{\gamma}(e_{k})}\\
			&= \alpha^{j}_{1}\alpha^{i}_{2}\skal{\mathcal{R}(\tau_{\gamma}(v_{j}),\tau_{\gamma}(v_{i}))\tau_{\gamma}(E)}{\tau_{\gamma}(e_{k})}\\
			&\quad + \alpha^{j}_{1}\skal{\mathcal{R}(\tau_{\gamma}(v_{j}),\tau_{\gamma}(Y_{2}))\tau_{\gamma}(E)}{\tau_{\gamma}(e_{k})}\\
			&\quad + \alpha^{i}_{2}\skal{\mathcal{R}(\tau_{\gamma}(Y_{1}),\tau_{\gamma}(v_{i}))\tau_{\gamma}(E)}{\tau_{\gamma}(e_{k})}\\
			&\quad + \alpha^{j}_{1}\beta^{i}_{2}\skal{\mathcal{R}(\tau_{\gamma}(v_{j}),\tau_{\gamma}(w_{i}))\tau_{\gamma}(E)}{\tau_{\gamma}(e_{k})}\\
			&\quad + \beta^{j}_{1}\alpha^{i}_{2}\skal{\mathcal{R}(\tau_{\gamma}(w_{j}),\tau_{\gamma}(v_{i}))\tau_{\gamma}(E)}{\tau_{\gamma}(e_{k})}\\
			&\quad + \beta^{j}_{1}\skal{\mathcal{R}(\tau_{\gamma}(w_{j}),\tau_{\gamma}(Y_{2}))\tau_{\gamma}(E)}{\tau_{\gamma}(e_{k})}\\
			&\quad + \beta^{i}_{2}\skal{\mathcal{R}(\tau_{\gamma}(Y_{1}),\tau_{\gamma}(w_{i}))\tau_{\gamma}(E)}{\tau_{\gamma}(e_{k})}\\
			&\quad + \beta^{j}_{1}\beta^{i}_{2}\skal{\mathcal{R}(\tau_{\gamma}(w_{j}),\tau_{\gamma}(w_{i}))\tau_{\gamma}(E)}{\tau_{\gamma}(e_{k})}\\
			&\quad + \skal{\mathcal{R}(\tau_{\gamma}(Y_{1}),\tau_{\gamma}(Y_{2}))\tau_{\gamma}(E)}{\tau_{\gamma}(e_{k})}.
	\end{align*}
	Using Lemma \ref{olmos-curvature} we derive
	\begin{equation*}
		\skal{\mathcal{R}(\tau_{\gamma}(v_{j}),\tau_{\gamma}(v_{i}))\tau_{\gamma}(E)}{\tau_{\gamma}(e_{k})} =
			-\skal{\underbrace{\mathcal{R}(\tau_{\gamma}(E),\tau_{\gamma}(e_{k}))\tau_{\gamma}(v_{j})}_{\in \Xi}}{\tau_{\gamma}(v_{i})} =0
	\end{equation*}
	and $\skal{\mathcal{R}(\tau_{\gamma}(v_{j}),\tau_{\gamma}(Y_{2}))\tau_{\gamma}(E)}{\tau_{\gamma}(e_{k})}
					=\skal{\mathcal{R}(\tau_{\gamma}(Y_{1}),\tau_{\gamma}(v_{i}))\tau_{\gamma}(E)}{\tau_{\gamma}(e_{k})}=0$.
	Moreover, the Bianchi identity for $\mathcal{R}$ implies
	\begin{align*}
		\skal{\mathcal{R}(\tau_{\gamma}(v_{j}),\tau_{\gamma}(w_{i}))\tau_{\gamma}(E)}{\tau_{\gamma}(e_{k})}
			&= -\skal{\underbrace{\mathcal{R}(\tau_{\gamma}(w_{i}),\tau_{\gamma}(E))\tau_{\gamma}(v_{j})}_{\in \Xi}}{\tau_{\gamma}(e_{k})}\\
			&\quad-\skal{\mathcal{R}(\tau_{\gamma}(E),\tau_{\gamma}(v_{j}))\tau_{\gamma}(w_{i})}{\tau_{\gamma}(e_{k})}\\
			&=\skal{\underbrace{\mathcal{R}(\tau_{\gamma}(w_{i}),\tau_{\gamma}(e_{k}))\tau_{\gamma}(E)}_{\in \Xi^{\perp}}}{\tau_{\gamma}(v_{j})}
	\end{align*}
	and $\skal{\mathcal{R}(\tau_{\gamma}(w_{j}),\tau_{\gamma}(v_{i}))\tau_{\gamma}(E)}{\tau_{\gamma}(e_{k})}=0$. Therefore we conclude
	\begin{align*}
		\skal{\mathcal{R}^{\tau_{\gamma}}(\xi_{1},\xi_{2})E}{e_{k}} &= \skal{\mathcal{P}_{0}(Y_{1},Y_{2})(E)}{e_{k}}\\
										&\quad + \skal{\mathcal{P}_{j}(\beta^{j}_{1}Y_{2}-\beta^{j}_{2}Y_{1})(E)}{e_{k}}\\
										&\quad + \skal{\beta^{i}_{1}\beta^{j}_{2}\mathcal{Q}_{ij}(E)}{e_{k}}.
	\end{align*}
\end{proof}
\begin{remark}
	Using the definition and Lemma \ref{olmos-curvature} we derive $\mathcal{P}_{0} \in \mathcal{K}(\liealg{g})$ and $\mathcal{P}_{k} \in \mathcal{B}_{h|_{\mathcal{E}}}(\liealg{g})$.
	A Lie algebra $\liealg{h}$ with $\liealg{h}=\mbox{span}\{Q(x):~x \in \mathcal{E},~Q \in \mathcal{B}_{h}(\liealg{h})\}$ is called weak Berger algebra. The computations above
	imply that $\liealg{g}$ is a weak Berger algebra if the $Q_{ij}$ are generated by $\mathcal{P}_{0}$ and $\mathcal{P}_{k}$. This happens, e.g., if $NM$ has Lorentzian signature. Moreover,
	representations of weak Berger algebras have been classified in \cite{MR2331527}. There each weak Berger algebra is shown to act as the holonomy representation of a Riemannian manifold.
\end{remark}
\bigskip
We would like to restrict to irreducible normal screen holonomy representations. For submanifolds in Lorentzian space forms this approach is justified by
\begin{proposition}\label{screen-decomp}
	Let $\liealg{g} \subset \liealg{so}(\mathcal{E})$ be the normal screen holonomy algebra of a submanifold in a Lorentzian space form. Then there is an orthogonal decomposition
	$\mathcal{E}=E_{0} \oplus \ldots \oplus E_{\ell}$ such that	$E_{0}$ is a trivial submodule and $E_{j}$ are irreducible. Moreover, there is a decomposition
	$\liealg{g}=\liealg{g}_{1}\oplus \ldots \oplus \liealg{g}_{\ell}$ into commuting ideals such that $\liealg{g}_{j}$ acts irreducibly on $E_{j}$ and trivially on $E_{i}$ for $i \neq j$.
\end{proposition}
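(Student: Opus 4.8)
The plan is to run the algebraic version of the Borel-Lichn\'erowicz splitting for $\liealg{g}$, exploiting that in the Lorentzian situation $\liealg{g}$ is generated by the images of algebraic curvature and weak curvature tensors. First I would reduce the proposition to a purely algebraic statement. Since $\tilde{M}$ is Lorentzian and $M$ is spacelike, $NM$ is Lorentzian, so in the notation of the introduction $p=1$; hence the only tensor $\mathcal{Q}_{ij}$ occurring in Proposition~\ref{curv-decomp} is $\mathcal{Q}_{11}$, which is zero by the antisymmetry of $\mathcal{R}$. Combining Proposition~\ref{curv-decomp} with the Ambrose-Singer description of $\liealg{hol}^{\perp}$ and the Remark above, $\liealg{g}$ is then spanned by elements $R(x,y)$ with $R\in\mathcal{K}(\liealg{g})$ and $Q(x)$ with $Q\in\mathcal{B}_{h}(\liealg{g})$, where $h$ denotes the (positive definite) restriction of $\skal{\cdot}{\cdot}$ to $\mathcal{E}$. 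So it suffices to establish the decomposition for any $\liealg{g}\subset\liealg{so}(\mathcal{E},h)$ enjoying this generation property.

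Next I would build the module decomposition and read off the block structure of the curvature tensors. As $h$ is positive definite and $\liealg{g}$ acts by $h$-skew operators, the $h$-orthogonal complement of any $\liealg{g}$-submodule is again a submodule; hence $\mathcal{E}$ is the $h$-orthogonal direct sum of irreducible $\liealg{g}$-modules, and collecting the trivial ones into $E_{0}=\mathcal{E}^{\liealg{g}}$ gives $\mathcal{E}=E_{0}\oplus E_{1}\oplus\cdots\oplus E_{\ell}$ with $E_{1},\dots,E_{\ell}$ nontrivial irreducible. Now let $R\in\mathcal{K}(\liealg{g})$. Since $R(x,y)\in\liealg{g}\subset\liealg{so}(\mathcal{E})$ and $R$ satisfies the first Bianchi identity, $R$ also has the pair symmetry $\skal{R(x,y)z}{w}=\skal{R(z,w)x}{y}$. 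If $x$ and $y$ do not both lie in a common $E_{i}$ with $i\geq 1$, then for all $z,w$ the vector $R(z,w)x$ stays in the ($\liealg{g}$-invariant) summand containing $x$ — which is $\{0\}$ if $x\in E_{0}$ — and that summand is $h$-orthogonal to $y$, so $\skal{R(x,y)z}{w}=0$ and hence $R(x,y)=0$. Consequently, for $x,y\in E_{i}$ and $z$ in any other summand, the Bianchi identity gives $R(x,y)z=-R(y,z)x-R(z,x)y=0$; since $R(x,y)$ is $h$-skew and preserves each summand, $R(x,y)\in\liealg{so}(E_{i})$, viewed inside $\liealg{so}(\mathcal{E})$ as the operators vanishing on $E_{i}^{\perp}$. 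The same two steps, with the cyclic identity defining $\mathcal{B}_{h}(\liealg{g})$ in place of the pair symmetry and the Bianchi identity, show $Q(x)=0$ for $x\in E_{0}$ and $Q(x)\in\liealg{so}(E_{i})$ for $x\in E_{i}$, for every $Q\in\mathcal{B}_{h}(\liealg{g})$.

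Finally I would assemble the ideals by setting $\liealg{g}_{i}:=\liealg{g}\cap\liealg{so}(E_{i})$ for $i=1,\dots,\ell$. By the previous paragraph and multilinearity in the $\mathcal{E}$-arguments, every spanning element of $\liealg{g}$ is a sum of elements of the $\liealg{g}_{i}$, so $\liealg{g}=\liealg{g}_{1}+\dots+\liealg{g}_{\ell}$; the sum is direct and the summands pairwise commute (operators supported on mutually orthogonal subspaces), hence the $\liealg{g}_{i}$ are commuting ideals of $\liealg{g}$ acting trivially on $E_{j}$ for $j\neq i$. Since the $\liealg{g}$-action on $E_{i}$ factors through $\liealg{g}_{i}$, irreducibility of $E_{i}$ as a $\liealg{g}$-module forces irreducibility as a $\liealg{g}_{i}$-module (in particular $\liealg{g}_{i}\neq 0$), which is exactly the assertion. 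There is no deep obstacle; the crux is the block-structure computation in the middle paragraph, where one must correctly invoke the pair symmetry of $R$ (legitimate precisely because $\liealg{g}\subset\liealg{so}(\mathcal{E})$) and the cyclic identities of $\mathcal{K}(\liealg{g})$ and $\mathcal{B}_{h}(\liealg{g})$, the rest being bookkeeping.
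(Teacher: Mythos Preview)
Your proof is correct and follows essentially the same approach as the paper: both use that in the Lorentzian case $\mathcal{Q}_{ij}=0$, so $\liealg{g}$ is generated by the $\mathcal{P}_{0}$'s and $\mathcal{P}_{k}$'s, and then exploit the pair symmetry/Bianchi identity of $\mathcal{P}_{0}\in\mathcal{K}(\liealg{g})$ and the cyclic identity of $\mathcal{P}_{k}\in\mathcal{B}_{h}(\liealg{g})$ to force the block structure with respect to any $\liealg{g}$-invariant orthogonal splitting. The only cosmetic difference is that you first decompose $\mathcal{E}$ completely and then verify the block structure for abstract $R\in\mathcal{K}(\liealg{g})$ and $Q\in\mathcal{B}_{h}(\liealg{g})$, whereas the paper argues directly with the concrete tensors $\mathcal{P}_{0},\mathcal{P}_{k}$ against a single invariant subspace $V$ and its complement (then iterates); the underlying computations are identical.
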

\begin{proof}
	We may use a similar approach as in \cite{MR1216527}. If $V \subset E$ is invariant then $E$ decomposes into $E=V \oplus V^{\perp}$ since $(E,h)$ is Euclidean. Let $g_{V}$ be the subalgebra of
	$\liealg{so}(\mathcal{E})$ which leaves $V$ invariant and annihilates $V^{\perp}$. Define $\liealg{g}_{1}:= \liealg{g} \cap g_{V}$ and $\liealg{g}_{2}:= \liealg{g} \cap g_{V^{\perp}}$.
	We need to show $\liealg{g} \subset g_{V} \oplus g_{V^{\perp}}$. Let $Y_{1},\tilde{Y}_{1} \in V$ and $Y_{2},\tilde{Y}_{2} \in V^{\perp}$. If the ambient space has Lorentzian signature we conclude
	$\mathcal{Q}_{ij}=0$ for all $i,j$. Moreover, using the algebraic curvature properties for $\mathcal{R}$ we observe
	\begin{equation*}
		\begin{matrix}
			\mathcal{P}_{0}(Y_{1},Y_{2})=0, & \quad & \mathcal{P}_{0}(Y_{1},\tilde{Y}_{1})|_{V^{\perp}}=0,& \quad & \mathcal{P}_{0}(Y_{2},\tilde{Y}_{2})|_{V}=0,\\
			\mathcal{P}_{k}(Y_{1})|_{V^{\perp}}=0, & \quad & \mathcal{P}_{k}(Y_{2})|_{V} =0.
		\end{matrix}
	\end{equation*}
	Hence, the proof follows using Proposition \ref{curv-decomp}.
\end{proof}
\begin{definition}
	Let $\liealg{g} \subset \liealg{so}(\mathcal{E})$. We say $\liealg{g}$ has the Borel-Lichn\'erowicz property if there are decompositions $\mathcal{E}=E_{0} \oplus \ldots \oplus E_{\ell}$
	and $\liealg{g}=\liealg{g}_{1}\oplus \ldots \oplus \liealg{g}_{\ell}$ such that each $\liealg{g}_{j}$ acts irreducibly on $E_{j}$ and trivially on $E_{i}$ for $i \neq j$.
\end{definition}
\begin{remark}
	According to Proposition \ref{screen-decomp} the normal screen holonomy algebra $\liealg{g}$ has the Borel-Lichn\'erowicz property if the ambient space is a Lorentzian space form.
	It is not known if this is true for ambient spaces with arbitrary signature. In the proof we have only used that the $\mathcal{Q}_{ij}$ are generated by $\mathcal{P}_{0}$
	and $\mathcal{P}_{k}$.
\end{remark}
\begin{lemma}\label{keylemma}
	Let $\liealg{g} \subset \liealg{so}(\mathcal{E})$ be the normal screen holonomy algebra of a submanifold in a pseudo-Riemannian space form. Assume $\liealg{g}$ has the Borel-Lichn\'erowicz property
	with $\liealg{g}=\liealg{g}_{1} \oplus \ldots \oplus \liealg{g}_{\ell}$. Then $\mathcal{K}(\liealg{g}_{j}) \neq 0$.
\end{lemma}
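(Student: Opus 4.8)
The plan is to split the spaces of (weak) algebraic curvature tensors along the Borel--Lichn\'erowicz decomposition, feed in the generators of $\liealg{g}$ produced by Proposition~\ref{curv-decomp}, and then recognise whichever piece survives as a (weak) Berger datum; the hard part will be the contribution of the $\mathcal{Q}_{ij}$.

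\emph{Step 1: the Borel--Lichn\'erowicz decomposition of curvature.} Given $\mathcal{E}=E_{0}\oplus\cdots\oplus E_{\ell}$ and $\liealg{g}=\liealg{g}_{1}\oplus\cdots\oplus\liealg{g}_{\ell}$ as in the hypothesis, I would first record that $\mathcal{K}(\liealg{g})=\bigoplus_{j}\mathcal{K}(\liealg{g}_{j})$ and $\mathcal{B}_{h}(\liealg{g})=\bigoplus_{j}\mathcal{B}_{h}(\liealg{g}_{j})$, the isomorphisms sending $R$ to $(\pi_{j}\circ R)_{j}$, where $\pi_{j}\colon\liealg{g}\to\liealg{g}_{j}$ is the projection. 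For $\mathcal{K}$ this is immediate: $\pi_{j}\circ R$ is skew-symmetric and $\liealg{g}_{j}$-valued, and since $\pi_{j}(R(x,y))z$ is exactly the $E_{j}$-component of $R(x,y)z$, the first Bianchi identity for $R$ restricts to that for $\pi_{j}\circ R$. For $\mathcal{B}_{h}$ one argues similarly, using in addition that for $x\in E_{a}$ the cyclic identity together with the orthogonality of the $E_{c}$ forces $Q(x)\in\liealg{g}_{a}$, so that $\pi_{j}\circ Q\in\mathcal{B}_{h}(\liealg{g}_{j})$.

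\emph{Step 2: inserting the generators.} By Proposition~\ref{curv-decomp} the algebra $\liealg{g}$ is spanned, over all admissible curves $\gamma$ and all $Y_{\bullet}\in\mathcal{E}$, by the elements $\mathcal{P}_{0}(Y_{1},Y_{2})$, $\mathcal{P}_{k}(Y_{k})$ and $\mathcal{Q}_{ij}$; applying $\pi_{j}$ shows $\liealg{g}_{j}$ is spanned by $\pi_{j}\mathcal{P}_{0}(Y_{1},Y_{2})$, $\pi_{j}\mathcal{P}_{k}(Y_{k})$ and $\pi_{j}\mathcal{Q}_{ij}$. As $\liealg{g}_{j}\neq 0$, at least one of these three families is not identically zero. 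If some $\pi_{j}\mathcal{P}_{0}(Y_{1},Y_{2})\neq 0$ we are done: by the remark after Proposition~\ref{curv-decomp} we have $\mathcal{P}_{0}\in\mathcal{K}(\liealg{g})$, so by Step~1 $\pi_{j}\circ\mathcal{P}_{0}$ is a nonzero element of $\mathcal{K}(\liealg{g}_{j})$.

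\emph{Step 3: the remaining case, and the main obstacle.} Suppose all $\pi_{j}\mathcal{P}_{0}$ vanish, so $\liealg{g}_{j}$ is spanned by the $(\pi_{j}\circ\mathcal{P}_{k})(Y_{k})$, with $\pi_{j}\circ\mathcal{P}_{k}\in\mathcal{B}_{h}(\liealg{g}_{j})$ by Step~1, together with the $\pi_{j}\mathcal{Q}_{ij}$. If one can show that the $\pi_{j}\mathcal{Q}_{ij}$ again lie in $\mbox{span}\{Q(x):x\in\mathcal{E},\,Q\in\mathcal{B}_{h}(\liealg{g}_{j})\}$, then $\liealg{g}_{j}$ is a weak Berger algebra; by the classification in \cite{MR2331527} it then acts as the holonomy representation of a Riemannian manifold, and since $\liealg{g}_{j}\neq 0$ the holonomy principle yields a nonzero element of $\mathcal{K}(\liealg{g}_{j})$, completing the proof. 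Controlling the $\mathcal{Q}_{ij}$ is the crux. In the Lorentzian case it is free, as there $\mathcal{Q}_{ij}=0$ (cf. the proof of Proposition~\ref{screen-decomp}), so $\liealg{g}_{j}$ is at once weak Berger. For higher index the $\mathcal{Q}_{ij}$ encode the $\liealg{so}(q)$-blocks of $\mathcal{R}^{\tau_{\gamma}}(w_{i},w_{j})$; I would attack them through the first Bianchi identity for the full algebraic curvature tensor $\mathcal{R}^{\tau_{\gamma}}$ on $N_{p}M$ together with the explicit form of $\mbox{Stab}_{\liealg{so}(N_{p}M)}(\Xi)$ from the introduction, rewriting each $\mathcal{Q}_{ij}$ in terms of the off-diagonal blocks of $\mathcal{R}^{\tau_{\gamma}}(w_{i},\cdot)$ and $\mathcal{R}^{\tau_{\gamma}}(w_{j},\cdot)$. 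The difficulty is that those blocks are not expressible through $\mathcal{P}_{0}$ and $\mathcal{P}_{k}$ via the Bianchi and pair-symmetry identities alone, so this is exactly the point at which the Borel--Lichn\'erowicz hypothesis --- and, presumably, a finer analysis of which algebraic curvature tensors actually occur on $N_{p}M$ --- has to enter.
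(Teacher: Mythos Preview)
Your reduction in Steps~1--2 is correct and matches the paper's implicit starting point: if some $\pi_{j}\circ\mathcal{P}_{0}\neq 0$ you are done, so one may assume $\pi_{j}\circ\mathcal{P}_{0}=0$ for every curve $\gamma$. The gap is precisely where you locate it: your Step~3 proposes to show that $\liealg{g}_{j}$ is a weak Berger algebra and then invoke Leistner's classification, but you do not manage to absorb the $\mathcal{Q}_{ij}$ into the span of weak curvature tensors, and the Bianchi/pair-symmetry manipulations you sketch will not do this in general (the paper itself notes, in the remark after Proposition~\ref{curv-decomp}, that one does not know whether $\liealg{g}$ is weak Berger beyond the Lorentzian case). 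So as written the proposal is incomplete, and the route you are on does not obviously close.

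The paper bypasses the weak Berger question entirely by going back to the concrete shape-operator description of $\mathcal{R}$ in Lemma~\ref{olmos-curvature}, namely
\[
\skal{\mathcal{R}(\xi_{1},\xi_{2})\xi_{3}}{\xi_{4}}=-\tfrac{1}{2}\,Tr\bigl([A_{\xi_{1}},A_{\xi_{2}}]\circ[A_{\xi_{3}},A_{\xi_{4}}]\bigr),
\]
together with the fact that $M$ is spacelike, so that $\{A,B\}=Tr(A\circ B^{T})$ is an honest inner product on skew-symmetric operators of $T_{p}M$. From $\pi_{j}\circ\mathcal{P}_{0}=0$ one gets, for $e_{2},e_{3}\in E_{j}$,
\[
0=\skal{\mathcal{P}_{0}(e_{2},e_{3})e_{2}}{e_{3}}=-\tfrac{1}{2}\,Tr\bigl([A_{\tau_{\gamma}e_{2}},A_{\tau_{\gamma}e_{3}}]^{2}\bigr),
\]
and positive definiteness forces $[A_{\tau_{\gamma}e_{2}},A_{\tau_{\gamma}e_{3}}]=0$. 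Now the \emph{same} trace formula expresses $\skal{\mathcal{P}_{k}(e_{1})e_{2}}{e_{3}}$ and $\skal{\mathcal{Q}_{ij}e_{2}}{e_{3}}$ as traces against $[A_{\tilde{e}_{2}},A_{\tilde{e}_{3}}]$ (after writing $\tau_{\gamma}e_{i}=\tilde{e}_{i}+V_{i}$ with $V_{i}\in\Xi$ and using the Ricci equation to see $[A_{V_{i}},A_{\tilde{e}_{j}}]=0$), hence against $[A_{\tau_{\gamma}e_{2}},A_{\tau_{\gamma}e_{3}}]=0$. Thus $\mathcal{P}_{k}|_{E_{j}}=0$ and $\mathcal{Q}_{ij}|_{E_{j}}=0$ as well, so $\liealg{g}_{j}=0$, contradicting $j>0$. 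The point you were missing is that the $\mathcal{Q}_{ij}$ need not be \emph{generated} by the $\mathcal{P}$'s at all: they are killed directly, because the vanishing of $\mathcal{P}_{0}$ on $E_{j}$ already forces the relevant shape-operator commutators to vanish.
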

\begin{proof}
	If $\mathcal{K}(\liealg{g}_{j})=0$ we conclude
	\begin{equation*}
		\liealg{g}_{j} = \mbox{span}\{\mathcal{P}_{k}|_{E_{j}},\mathcal{Q}|_{E_{j}}:~k \neq 0 \}.
	\end{equation*}
	using Proposition \ref{curv-decomp}. Let $e_{1},e_{2},e_{3} \in E_{j}$ and write $\tau_{\gamma}e_{i} = \tilde{e}_{i} + V_{i}$ for $\tilde{e}_{i} \in E_{j} \subset \Xi^{\perp}$ and $V_{i} \in \Xi$.
	By Lemma \ref{olmos-curvature} we have
	\begin{align*}
		0 = \skal{\mathcal{P}_{0}(e_{2},e_{3})e_{2}}{e_{3}} &= \skal{\mathcal{R}^{\tau_{\gamma}}(e_{2},e_{3})e_{2}}{e_{3}}\\
			&= -\frac{1}{2}Tr([A_{\tau_{\gamma}(e_{2})},A_{\tau_{\gamma}(e_{3})}] \circ [A_{\tau_{\gamma}(e_{2})},A_{\tau_{\gamma}(e_{3})}]).
	\end{align*}
	Since $M$ is spacelike $Tr(A \circ B^{T})$ defines an inner product on skewsymmetric operators. Therefore, we derive $[A_{\tau_{\gamma}(e_{2})},A_{\tau_{\gamma}(e_{3})}]=0$.
	On the other hand
	\begin{align*}
		\skal{\mathcal{P}_{k}(e_{1})e_{2}}{e_{3}} &= \skal{\mathcal{R}^{\tau_{\gamma}}(w_{k},e_{1})e_{2}}{e_{3}}\\
					&= \skal{\mathcal{R}(\tau_{\gamma}(w_{k}),\tau_{\gamma}(e_{1}))\tau_{\gamma}(e_{2})}{\tilde{e}_{3}}
									+ \skal{\underbrace{\mathcal{R}(\tau_{\gamma}(w_{k}),\tau_{\gamma}(e_{1}))\tau_{\gamma}(e_{2})}_{\in \Xi^{\perp}}}{V_{3}}\\
					&= \skal{\mathcal{R}(\tau_{\gamma}(w_{k}),\tau_{\gamma}(e_{1}))\tilde{e}_{2}}{\tilde{e}_{3}}
									+\skal{\underbrace{\mathcal{R}(\tau_{\gamma}(w_{k}),\tau_{\gamma}(e_{1}))V_{2}}_{\in \Xi}}{\tilde{e}_{3}}\\
					&=-\frac{1}{2}Tr([A_{\tau_{\gamma}(w_{k})},A_{\tau_{\gamma}(e_{1})}] \circ [A_{\tilde{e}_{2}},A_{\tilde{e}_{3}}])
	\end{align*}
	and
	\begin{align*}
		\skal{\mathcal{Q}_{ij}e_{2}}{e_{3}} &= \skal{\mathcal{R}^{\tau_{\gamma}}(w_{i},w_{j})e_{2}}{e_{3}}\\
				&=-\frac{1}{2}Tr([A_{\tau_{\gamma}(w_{i})},A_{\tau_{\gamma}(w_{j})}] \circ [A_{\tilde{e}_{2}},A_{\tilde{e}_{3}}]).
	\end{align*}
	However, by the Ricci equation we have
	\begin{equation*}
		0 = \skal{R^{\perp}(X,Y)V_{i}}{\tilde{e}_{j}} = \skal{[A_{V_{i}},A_{\tilde{e}_{j}}]X}{Y},
	\end{equation*}
	i.e., $[A_{V_{i}},A_{\tilde{e}_{j}}]=0$. Hence, we conclude
	\begin{align*}
		\skal{\mathcal{P}_{k}(e_{1})e_{2}}{e_{3}} &= -\frac{1}{2}Tr([A_{\tau_{\gamma}(w_{k})},A_{\tau_{\gamma}(e_{1})}] \circ [A_{\tau_{\gamma}(e_{2})},A_{\tau_{\gamma}(e_{3})}])=0,\\
		\skal{\mathcal{Q}_{ij}e_{2}}{e_{3}} &= -\frac{1}{2}Tr([A_{\tau_{\gamma}(w_{i})},A_{\tau_{\gamma}(w_{j})}] \circ [A_{\tau_{\gamma}(e_{2})},A_{\tau_{\gamma}(e_{3})}])=0.
	\end{align*}
	Therefore $\liealg{g}_{j}=0$ and we have a contradiction.
\end{proof}
Observe that we did not use any irreducibility conditions in the proof of Lemma \ref{keylemma}. However, we need the normal screen holonomy to act irreducibly for the proof of our main result:
\begin{theorem}\label{mainscreenthm}
	Let $\liealg{g} \subset \liealg{so}(\mathcal{E})$ be the normal screen holonomy algebra of a submanifold in a pseudo-Riemannian space form. Assume $\liealg{g}$ has the Borel-Lichn\'erowicz property
	and let $G_{j} \subset SO(E_{j})$ be the connected Lie subgroup with $Lie(G_{j})=\liealg{g}_{j}$. Then $G_{j}$ acts irreducibly on $E_{j}$ as the isotropy representation of a semisimple
	Riemannian symmetric space if $j > 0$.
\end{theorem}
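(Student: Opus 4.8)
The plan is to show that, for $j>0$, the algebra $\liealg{g}_j$ is spanned by the values of a single nonzero algebraic curvature tensor with positive semidefinite curvature operator, and then to run the argument by which the normal holonomy theorem is proved in \cite{MR1023346}.

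To begin, I would fix $j>0$ and identify $\liealg{g}_j$ with its restriction to $E_j$; this restriction is injective because $\liealg{g}_j$ annihilates $E_i$ for $i\neq j$, and every element of $\liealg{g}=\bigoplus_i\liealg{g}_i$ preserves each $E_i$, so that Proposition~\ref{curv-decomp} reads
\begin{equation*}
	\liealg{g}_j=\mbox{span}\bigl\{\,\mathcal{P}_0(Y_1,Y_2)|_{E_j},\;\mathcal{P}_k(Y)|_{E_j},\;\mathcal{Q}_{ab}|_{E_j}\;:\;Y_{\cdot}\in\mathcal{E},\;\gamma(0)=p\,\bigr\}.
\end{equation*}
For each such curve $\gamma$ put $\psi_{\gamma}(x\wedge y):=[A_{\tau_{\gamma}x},A_{\tau_{\gamma}y}]$, a skewsymmetric operator on $T_{\gamma(1)}M$ since the shape operators are selfadjoint; because $M$ is spacelike, $\skal{S}{T}:=Tr(S\circ T^{T})$ is a positive definite inner product on $\liealg{so}(T_{\gamma(1)}M)$ with $Tr(S\circ T)=-\skal{S}{T}$ for skewsymmetric $S,T$. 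Combining the definition of $\mathcal{R}^{\tau_{\gamma}}$ with the trace formula of Lemma~\ref{olmos-curvature}, one obtains for $\xi_1,\xi_2\in N_pM$ and $z,w\in E_j$ that $\skal{\mathcal{R}^{\tau_{\gamma}}(\xi_1,\xi_2)z}{w}=\frac{1}{2}\skal{[A_{\tau_{\gamma}\xi_1},A_{\tau_{\gamma}\xi_2}]}{\psi_{\gamma}(z\wedge w)}$. In particular $\mathcal{P}_0|_{E_j}\in\mathcal{K}(\liealg{g}_j)$, and under the identification $\liealg{so}(E_j)\cong\Lambda^{2}E_j$ the symmetric endomorphism $x\wedge y\mapsto\mathcal{P}_0(x,y)|_{E_j}$ equals $\frac{1}{2}\psi_{\gamma}^{*}\psi_{\gamma}$ on $\Lambda^{2}E_j$; it is therefore positive semidefinite, with image $(\ker\psi_{\gamma}|_{\Lambda^{2}E_j})^{\perp}$.

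The next step is to absorb the remaining generators. Each of $\mathcal{P}_0(Y_1,Y_2)$, $\mathcal{P}_k(Y)$, $\mathcal{Q}_{ab}$ is, up to the harmless projection $pr_{\mathcal{E}}$, of the form $\mathcal{R}^{\tau_{\gamma}}(\xi_1,\xi_2)$; by the identity above its restriction to $E_j$, viewed in $\Lambda^{2}E_j$, pairs to zero with every element of $\ker\psi_{\gamma}|_{\Lambda^{2}E_j}$, hence lies in $(\ker\psi_{\gamma}|_{\Lambda^{2}E_j})^{\perp}=\mbox{span}\{\mathcal{P}_0(x,y)|_{E_j}:x,y\in E_j\}$. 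Thus $\liealg{g}_j=\mbox{span}\{\mathcal{P}_0(x,y)|_{E_j}:x,y\in E_j,\;\gamma(0)=p\}$. Replacing finitely many of these tensors (enough to span) by their sum --- a sum of positive semidefinite curvature operators, whose image is the sum of the images --- yields a single $R\in\mathcal{K}(\liealg{g}_j)$ with positive semidefinite curvature operator and $\liealg{g}_j=\mbox{span}\{R(x,y):x,y\in E_j\}$; and $R\neq 0$, since $R=0$ would force $\psi_{\gamma}\equiv 0$ on $\Lambda^{2}E_j$ for all $\gamma$ and hence $\liealg{g}_j=0$, impossible as $E_j$ is a nontrivial $\liealg{g}_j$-module (this last point is essentially Lemma~\ref{keylemma}).

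It then remains to argue as in \cite{MR1023346}: $(E_j,\liealg{g}_j,R)$ is an irreducible holonomy system in Simons' sense with $\liealg{g}_j=\mbox{span}\{R(x,y)\}$, $R\neq 0$ and positive semidefinite curvature operator. By Simons' holonomy theorem, either the system is symmetric --- in which case the usual construction on $\liealg{g}_j\oplus E_j$ with bracket twisted by $R$ is an orthogonal symmetric Lie algebra, effective (as $\liealg{g}_j\subset\liealg{so}(E_j)$) and non-Euclidean (as $E_j$ is a nontrivial module), hence semisimple, whose isotropy representation is $(\liealg{g}_j,E_j)$ (using $[\liealg{p},\liealg{p}]=\mbox{span}\{R(x,y)\}=\liealg{g}_j$) --- or $\liealg{g}_j$ acts transitively on the unit sphere of $E_j$, so $\liealg{g}_j$ occurs on the Montgomery--Samelson--Borel list of sphere-transitive algebras. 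On that list the entries that are not isotropy representations of symmetric spaces, namely $\liealg{su}(m)$, $\liealg{sp}(m)$, $\liealg{g}_{2}$, $\liealg{spin}(7)$, admit only Ricci-flat curvature tensors, and a Ricci-flat curvature tensor with positive semidefinite curvature operator has vanishing scalar curvature, hence vanishing curvature operator --- contradicting $R\neq 0$; while the one sphere-transitive algebra that is not a Berger algebra, $\liealg{sp}(m)\oplus\liealg{u}(1)$, cannot equal $\mbox{span}\{R(x,y)\}$ for any curvature tensor $R$. Hence $G_j$ acts on $E_j$ as the isotropy representation of a semisimple Riemannian symmetric space. The first two steps only make explicit the estimates already behind Lemma~\ref{keylemma} and Proposition~\ref{curv-decomp}; I expect the real work to be the last step --- assembling Simons' theorem, the classification of transitive actions on spheres, and the disposal of the exotic holonomies via the semidefiniteness of $R$.
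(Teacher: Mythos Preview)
Your argument is correct, but it takes a longer and more hands-on route than the paper. The paper does not bother to show that $\liealg{g}_j$ is spanned by the $\mathcal{P}_0(x,y)|_{E_j}$, nor does it build a single $R$ with full span and positive semidefinite curvature operator. Instead it simply picks \emph{one} curve $\gamma$ with $R_j:=\mathcal{P}_0|_{E_j\times E_j\times E_j}\neq 0$ (this is what Lemma~\ref{keylemma} actually yields), observes that $(E_j,R_j,G_j)$ is an irreducible holonomy system, and computes
\[
\mathrm{scal}(R_j)=-\tfrac12\sum_{k,\ell}\bigl\{[A_{\tau_\gamma e_k},A_{\tau_\gamma e_\ell}],[A_{\tau_\gamma e_k},A_{\tau_\gamma e_\ell}]\bigr\}<0,
\]
then invokes Simons' Theorem~5 from \cite{MR0148010} directly: an irreducible holonomy system with nonzero scalar curvature is the isotropy representation of a semisimple Riemannian symmetric space. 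That single citation replaces your entire last paragraph (Simons' dichotomy, the Montgomery--Samelson--Borel list, the Ricci-flat elimination of $\liealg{su}(m),\liealg{sp}(m),\liealg{g}_2,\liealg{spin}(7)$, and the disposal of $\liealg{sp}(m)\oplus\liealg{u}(1)$).

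Your approach is essentially Olmos' original argument in \cite{MR1023346}; the paper's shortcut is to notice that the scalar-curvature criterion is already packaged in Simons' paper. What your route buys is a little extra information---the equality $\liealg{g}_j=\mathrm{span}\{R(x,y)\}$ and the semidefiniteness of the curvature operator---and it is more self-contained, since it only uses Simons' dichotomy rather than the stronger Theorem~5. What the paper's route buys is brevity: one inequality and one citation. Your ``absorption'' step (showing that all of $\mathcal{P}_k|_{E_j}$ and $\mathcal{Q}_{ab}|_{E_j}$ already lie in the span of $\mathcal{P}_0(x,y)|_{E_j}$ via the kernel of $\psi_\gamma$) is a nice observation that the paper does not need but which is implicit in the proof of Lemma~\ref{keylemma}.
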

\begin{proof}
	Given the observations from Proposition \ref{curv-decomp} and Lemma \ref{keylemma} we can find
	\begin{equation*}
		R_{j}:= pr_{E_{j}} \circ \mathcal{R}^{\tau_{\gamma}}|_{E_{j} \times E_{j} \times E_{j}} \neq 0.
	\end{equation*}
	Moreover, $\liealg{g}_{j}$ and therefore $G_{j} \subset SO(E_{j})$ act irreducibly on $E_{j}$, i.e., $G_{j}$ is compact and $(E_{j},R_{j},G_{j})$ is an irreducible holonomy system in the
	sense of \cite{MR0148010}. Hence, the statement follows from \cite{MR0148010}[Thm. 5] once we have shown $\text{scal}(R_{j}) \neq 0$ where
	$\text{scal}(R_{j})=\sum_{k,\ell=1}^{\dim E_{j}}{\skal{R_{j}(e_{k},e_{\ell})e_{\ell}}{e_{k}}}$ is the scalar curvature of $R_{j}$. We compute
	\begin{align*}
		\sum_{k,\ell=1}^{\dim E_{j}}{\skal{R_{j}(e_{k},e_{\ell})e_{\ell}}{e_{k}}} &= \sum_{k,\ell=1}^{\dim E_{j}}{\skal{\mathcal{R}^{\tau_{\gamma}}(e_{k},e_{\ell})e_{\ell}}{e_{k}}}\\
			&= \sum_{k,\ell=1}^{\dim E_{j}}{\skal{\mathcal{R}(\tau_{\gamma}(e_{k}),\tau_{\gamma}(e_{\ell}))\tau_{\gamma}(e_{\ell})}{\tau_{\gamma}(e_{k})}}\\
			&= -\frac{1}{2}\sum_{k,\ell=1}^{\dim E_{j}}{Tr([A_{\tau_{\gamma}(e_{k})},A_{\tau_{\gamma}(e_{\ell})}] \circ [A_{\tau_{\gamma}(e_{\ell})},A_{\tau_{\gamma}(e_{k})}])}\\
			&=-\frac{1}{2}\sum_{k,\ell=1}^{\dim E_{j}}{\{ [A_{\tau_{\gamma}(e_{k})},A_{\tau_{\gamma}(e_{\ell})}],[A_{\tau_{\gamma}(e_{k})},A_{\tau_{\gamma}(e_{\ell})}]\} }\\
			&<0,
	\end{align*}
	where $\{A,B\} = Tr(A \circ B^{T})$ is the usual inner product on skewsymmetric operators. The last inequality follows since $\skal{R_{j}(e_{k},e_{\ell})e_{\ell}}{e_{k}} \neq 0$ for some $k,\ell$
	unless $R_{j}=0$.
\end{proof}
\section{Applications}
\subsection{Lorentzian normal holonomy representations}\hfill \par
As an application of the previous results we want to derive a classification of all normal holonomy representations of spacelike submanifolds $M \subset \tilde{M}$ in Lorentzian space forms.
Using the same approach as in \cite{MR1023346} we get a orthogonal holonomy invariant splitting
\begin{equation*}
	NM=NM_{0} \oplus NM_{L} \oplus NM_{1} \oplus \ldots \oplus NM_{r}
\end{equation*}
of the normal bundle and a splitting
\begin{equation*}
	Hol^{\perp}_{0}(M) = Hol^{\perp}_{L} \times Hol^{\perp}_{1} \times \ldots \times Hol^{\perp}_{r}
\end{equation*}
of the restricted normal holonomy group such that\footnote{$NM_{0}$ may have Lorentzian signature and $Hol^{\perp}_{L}$ may vanish. E.g., if $M \subset \mathbb{H}^{n} \subset \R^{1,n}$
									where $\mathbb{H}^{n}$ is the hyperbolic space then using the position vector field we
									get a timelike $\nabla^{\perp}$-parallel normal vector field on $M \subset \R^{1,n}$.}
\begin{itemize}
	\item \noindent
	$NM_{0}$ is a maximal flat and non-degenerate subspace and $NM_{j}$ has Riemannian signature for $j \geq 1$,
	\item \noindent
	$Hol^{\perp}_{j}$ acts irreducibly on $NM_{j}$ for $j \geq 1$ and trivially on $NM_{i}$ for $i \neq j$,
	\item \noindent
	$Hol^{\perp}_{L}$ acts weakly irreducibly on $NM_{L}$ and trivially on $NM_{i}$ for $i \neq L$.
\end{itemize}
The actions of $Hol^{\perp}_{j}$ have been classified in \cite{MR1023346}. Moreover, if $Hol^{\perp}_{L} \subset SO_{0}(1,m+1)$ acts irreducibly then by a well known result \cite{MR1836778} we conclude
$Hol^{\perp}_{L} = SO_{0}(1,m+1)$. Hence, we have to classify the weakly irreducible but non-irreducible representations. For such representations we have
\begin{theorem}[B{\'e}rard-Bergery \& Ikemakhen \cite{MR1216527}]\label{BBIthm}
	Let $\liealg{h} \subset \liealg{so}(1,m+1)$ be the Lie algebra of the connected Lie group $H \subset SO(1,m+1)$
	acting weakly irreducibly but non-irreducibly.
	Then $\liealg{h} \subset (\R \oplus \liealg{so}(m)) \ltimes \R^{m}$. Moreover, using
	$\liealg{g}:= pr_{\liealg{so}(m)}(\liealg{h})$ the Lie algebra $\liealg{h}$ belongs to one of the following
	types:
	\begin{itemize}
		\item \noindent
		Type 1: $\liealg{h} = (\R \oplus \liealg{g}) \ltimes \R^{m}$
		\item \noindent
		Type 2: $\liealg{h} = \liealg{g} \ltimes \R^{m}$
		\item \noindent
		Type 3:
		\begin{equation*}
			\liealg{h} = \left\lbrace \begin{pmatrix}
							\varphi(A) & w^T & 0\\
							0 & A & w\\
							0 & 0 & -\varphi(A)
						\end{pmatrix}: A \in \liealg{g},~w \in \R^{m} \right\rbrace
		\end{equation*}
		where $\varphi: \liealg{g} \twoheadrightarrow \R$ is an epimorphism satisfying
		$\varphi|_{[\liealg{g},\liealg{g}]}=0$.
		\item \noindent
		Type 4:
		There is $0< \ell < m$ such that $\R^{m} = \R^{\ell} \oplus \R^{m-\ell}$,
		$\liealg{g} \subset \liealg{so}(\ell)$ and
		\begin{equation*}
			\liealg{h} = \left\lbrace \begin{pmatrix}
							0 & \psi(A)^T & w^T & 0\\
							0 & 0 & 0 & -\psi(A)\\
							0 & 0 & A & -w\\
							0 & 0 & 0 & 0
						\end{pmatrix}: A \in \liealg{g},~w \in \R^{\ell}  \right\rbrace
		\end{equation*}
		for some epimorphism $\psi: \liealg{g} \twoheadrightarrow \R^{m-\ell}$ satisfying
		$\psi|_{[\liealg{g},\liealg{g}]}=0$.
	\end{itemize}
\end{theorem}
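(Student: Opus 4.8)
The plan is to follow the argument of B{\'e}rard-Bergery and Ikemakhen, whose backbone is a reduction to the stabiliser of an isotropic line followed by a case analysis of the dilational and translational data attached to $\liealg{h}$. First I would produce an invariant null line. Since $H$ acts weakly irreducibly but not irreducibly there is a proper nonzero $\liealg{h}$-invariant subspace $W \subset \R^{1,m+1}$, and weak irreducibility forces $W$ to be degenerate; then $W \cap W^{\perp}$ is a nonzero $\liealg{h}$-invariant totally isotropic subspace, and since isotropic subspaces of a Lorentzian vector space have dimension at most one this is an invariant null line $\R v$. Choosing a null basis $(v, e_{1}, \ldots, e_{m}, w)$ adapted to $\R v$ identifies $\mathrm{Stab}_{\liealg{so}(1,m+1)}(\R v)$ with $(\R \oplus \liealg{so}(m)) \ltimes \R^{m}$, where $\R$ acts by opposite dilations on $v$ and $w$, $\liealg{so}(m)$ acts standardly on $\mathcal{E} := \mathrm{span}(e_{1}, \ldots, e_{m})$ and $\R^{m}$ is the nilradical; I write elements of $\liealg{h}$ accordingly as triples $(a, A, x)$.

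Next I would extract the structure invariants. The projection onto $\R \oplus \liealg{so}(m)$ is a Lie algebra homomorphism on the parabolic, so $pr_{\R \oplus \liealg{so}(m)}(\liealg{h})$ is a subalgebra and $\liealg{n} := \liealg{h} \cap \R^{m}$ is an abelian ideal invariant under it. Put $\liealg{g} := pr_{\liealg{so}(m)}(\liealg{h}) \subset \liealg{so}(m)$, let $\liealg{k} := \liealg{h} \cap (\R \oplus \R^{m})$ be the kernel of the homomorphism $\liealg{h} \twoheadrightarrow \liealg{g}$, and decompose $\mathcal{E}$ orthogonally into the fixed space of $\liealg{g}$ and its complement; this yields $\R^{m} = \R^{\ell} \oplus \R^{m-\ell}$ with $\liealg{g} \subset \liealg{so}(\ell)$ acting only on the first summand.

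Then I would run the case analysis according to whether $pr_{\R}(\liealg{h})$ vanishes. If $pr_{\R}(\liealg{h}) = \R$, then using weak irreducibility one rules out invariant nondegenerate lines --- in particular the diagonal lines $\R(e - \skal{x}{e}v)$ annihilated by a dilation $(1, 0, x) \in \liealg{h}$ --- and deduces that $\liealg{h}$ contains the full nilradical $\R^{m}$; one then examines the assignment $\varphi$ sending $A \in \liealg{g}$ to the $\R$-component of a lift. Either $\R$ sits in $\liealg{h}$ as an independent summand, which is Type~1, or $\varphi$ is well defined and, expanding a bracket in the $(a, A, x)$ coordinates, $\varphi$ is a character of $\liealg{g}$, hence $\varphi|_{[\liealg{g},\liealg{g}]} = 0$, which is Type~3 (collapsing to Type~1 when $\varphi$ can be absorbed by changing the section). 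If $pr_{\R}(\liealg{h}) = 0$, then $\liealg{n}$ is a genuine $\liealg{g}$-submodule and $\R^{m} = \liealg{n} \oplus \liealg{n}^{\perp}$ with $\liealg{g}$-invariant summands; weak irreducibility forces $\R^{\ell} \subset \liealg{n}$, and the induced map $\psi : \liealg{g} \to \R^{m}/\liealg{n}$, $A \mapsto pr_{\R^{m}}(\mathrm{lift}) \bmod \liealg{n}$, is $\liealg{g}$-equivariant into a trivial module, hence a homomorphism into an abelian Lie algebra, so $\psi|_{[\liealg{g},\liealg{g}]} = 0$. If $\psi = 0$ then $\liealg{n} = \R^{m}$, which is Type~2; otherwise, identifying the image of $\psi$ with $\R^{m-\ell}$ and normalising, one obtains Type~4.

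The hardest parts, I expect, are two. First, squeezing out of weak irreducibility the statement that the nilradical (respectively $\R^{\ell}$) lies entirely inside $\liealg{h}$ in each case: one must carefully exclude leftover nondegenerate invariant subspaces --- not only those sitting inside $\mathcal{E}$, which are controlled by $\liealg{g}$-invariance together with orthogonality to $pr_{\R^{m}}(\liealg{h})$, but also the diagonal ones crossing $\R v$ --- and this requires tracking the interplay of $\liealg{n}$, $pr_{\R^{m}}(\liealg{h})$ and the fixed/moving decomposition of $\mathcal{E}$ under $\liealg{g}$. Second, checking the cocycle conditions $\varphi|_{[\liealg{g},\liealg{g}]} = 0$ and $\psi|_{[\liealg{g},\liealg{g}]} = 0$ and that the residual freedom in the choice of linear section (modification by $\liealg{k}$-valued maps) suffices to normalise $\liealg{h}$ to exactly one of the four listed forms, with no surviving mixed $\varphi$--$\psi$ twist. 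The remaining points --- closure under bracket and the explicit matrix shapes --- are Jacobi-identity computations in the $(a, A, x)$ coordinates and are routine.
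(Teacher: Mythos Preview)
The paper does not give a proof of this theorem; it is quoted verbatim from B\'erard-Bergery and Ikemakhen \cite{MR1216527} and used as input for the subsequent corollary, so there is no ``paper's own proof'' to compare against. Your outline is a faithful sketch of the original B\'erard-Bergery--Ikemakhen argument: produce an invariant isotropic line from weak irreducibility in Lorentzian signature, thereby placing $\liealg{h}$ in the parabolic $(\R \oplus \liealg{so}(m)) \ltimes \R^{m}$, and then run the case split on $pr_{\R}(\liealg{h})$ together with the cocycle analysis for $\varphi$ and $\psi$. The two points you flag as delicate --- forcing the full nilradical (resp.\ $\R^{\ell}$) into $\liealg{h}$ from weak irreducibility, and normalising the section so that no mixed twist survives --- are exactly the places where the original proof does the real work, so your assessment of the difficulty is accurate.
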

In our case the Lie algebra $\liealg{g}$ in the previous theorem is the normal screen holonomy algebra. Combining the Theorems \ref{BBIthm}, \ref{mainscreenthm} and Proposition \ref{screen-decomp}
we derive
\begin{corollary}
	Let $M$ be a spacelike submanifold in a Lorentzian space form. Then the weakly irreducible part of the normal holonomy representation is given by one of the types in Thm. \ref{BBIthm}
	and its normal screen holonomy representation acts as the isotropy representation of a Riemannian symmetric space.
\end{corollary}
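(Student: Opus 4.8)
The plan is to run the structural results of Section~2 through the weakly irreducible splitting of $NM$ set up at the beginning of this subsection. I would first restrict attention to the weakly irreducible factor $NM_{L}$; as recorded above, $NM_{L} \cong \R^{1,m+1}$ and $\liealg{hol}^{\perp}_{L} \subset \liealg{so}(1,m+1)$ acts weakly irreducibly. If this action is already irreducible, then $Hol^{\perp}_{L} = SO_{0}(1,m+1)$ by \cite{MR1836778} and that factor is completely understood; so I would then assume $\liealg{hol}^{\perp}_{L}$ is weakly irreducible but not irreducible, so that Theorem~\ref{BBIthm} applies and $\liealg{hol}^{\perp}_{L}$ is of one of the Types~1--4, with $\liealg{g} := pr_{\liealg{so}(m)}(\liealg{hol}^{\perp}_{L})$.

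The second step --- the only one that needs genuine care --- is to identify the algebra $\liealg{g} \subset \liealg{so}(m)$ (respectively $\liealg{g} \subset \liealg{so}(\ell)$ in Type~4) produced by the B\'erard-Bergery--Ikemakhen list with the normal screen holonomy algebra of Section~2. This is just the identification made in the introduction, specialised to $p = 1$: here $\Xi = \mbox{span}(v_{1})$ is a maximal invariant isotropic line, the basis $(v_{1},e_{1},\ldots,e_{q},w_{1})$ exhibits $\mbox{Stab}_{\liealg{so}(N_{p}M)}(\Xi)$ as $(\R \oplus \liealg{so}(q)) \ltimes \R^{q}$, and under this identification the projection $pr_{\liealg{so}(q)}$ of Section~2 is exactly the projection $pr_{\liealg{so}(m)}$ appearing in Theorem~\ref{BBIthm} (with $q=m$ in Types~1--3 and $q=\ell$ in Type~4, the complementary $\R^{m-\ell}$ being acted on trivially). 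Hence $\liealg{g}$ is literally the normal screen holonomy algebra of Section~2.

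Once this is established the rest is formal. Because the ambient space is Lorentzian, Proposition~\ref{screen-decomp} applies, so $\liealg{g}$ has the Borel-Lichn\'erowicz property and there are orthogonal decompositions $\mathcal{E}=E_{0}\oplus E_{1}\oplus\ldots\oplus E_{\ell}$ (with $E_{0}$ trivial and $E_{j}$ irreducible) and $\liealg{g}=\liealg{g}_{1}\oplus\ldots\oplus\liealg{g}_{\ell}$. Theorem~\ref{mainscreenthm} then shows that for every $j>0$ the connected group $G_{j}\subset SO(E_{j})$ acts on $E_{j}$ as the isotropy representation of a semisimple Riemannian symmetric space $N_{j}$. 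I would conclude that the normal screen holonomy representation of $G_{1}\times\ldots\times G_{\ell}$ on $\mathcal{E}$ is the isotropy representation of the Riemannian symmetric space $\R^{\dim E_{0}}\times N_{1}\times\ldots\times N_{\ell}$, which is symmetric but in general only Riemannian (not semisimple) because of the possible flat factor $E_{0}$; in Type~4 this $E_{0}$ also absorbs the trivially acted $\R^{m-\ell}$. This is precisely the statement of the corollary.

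The step I expect to be the main obstacle is the bookkeeping in the second paragraph: checking carefully, type by type, that the $\liealg{g}$ extracted from Theorem~\ref{BBIthm} really is the $pr_{\liealg{so}(q)}$-image of $\liealg{hol}^{\perp}_{L}$ as defined in Section~2, and keeping track of where the trivial summand $E_{0}$ (and, in Type~4, the extra $\R^{m-\ell}$) comes from. Given that identification, the corollary is an immediate combination of Theorem~\ref{BBIthm}, Proposition~\ref{screen-decomp} and Theorem~\ref{mainscreenthm}.
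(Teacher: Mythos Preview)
Your proposal is correct and follows exactly the route the paper takes: the paper's entire proof is the one-line remark that the corollary follows by combining Theorem~\ref{BBIthm}, Theorem~\ref{mainscreenthm} and Proposition~\ref{screen-decomp}, and your write-up is a careful unpacking of precisely that combination. The bookkeeping you flag (identifying the $\liealg{g}$ of Theorem~\ref{BBIthm} with the normal screen holonomy algebra of Section~2, and absorbing the trivial/Type~4 summands into $E_{0}$) is real but routine, and the paper simply leaves it implicit.
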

\subsection{Submanifolds with $\nabla^{\perp}$-parallel second fundamental form}\hfill \par
Let $M \subset \R^{1,N}$ be a full spacelike submanifold and $\Pi(X,Y):=\nabla^{\R^{1,N}}_{X}Y -\nabla^{M}_{X}Y$ its
second fundamental form. Moreover, assume $\nabla^{\perp}\Pi =0$.\footnote{Full extrinsically symmetric submanifolds
								provide examples for this class of submanifolds.
								A detailed study can be found in \cite{kath-2008}.}
In order to study the possible normal holonomy groups we will apply ideas from \cite{MR0367876}. Let $\xi \in \Gamma(NM)$
and $X,Y,Z \in \Gamma(TM)$. Then
\begin{equation*}
	\skal{(\nabla^{\perp}_{X}\Pi)(Y,Z)}{\xi} = \skal{(\nabla_{X}A_{\xi})Y}{Z} -\skal{A_{\nabla^{\perp}_{X}\xi}Y}{Z}.
\end{equation*}
The mean curvature vector field $H:=\frac{1}{\dim M}\sum_{i=1}^{\dim M}{\Pi(e_{i},e_{i})}=\frac{Tr(\Pi)}{\dim M}$ is 
$\nabla^{\perp}$-parallel and therefore $\nabla_{X}A_{H}=0$. Using \cite{MR0367876}[Lemma 1] we conclude that $A_{H}$
has constant eigenvalues $(\lambda_{1},\ldots,\lambda_{r})$ and parallel eigendistributions. Since $M$ is full we may apply \cite{kath-2008}[Lemma 3.2] and Moore's Lemma. Hence, $M$
is locally a product of immersions
\begin{equation*}
	M=M_{1}\times \ldots \times M_{r} \rightarrow \R^{1,n} \times \R^{n_{2}} \ldots \R^{n_{r}}.
\end{equation*}
Consider the local normal holonomy of the full immersion $M_{1} \rightarrow \R^{1,n}$. By construction
$\nabla^{\perp}\Pi_{1}=0$ and $A_{H}=\lambda \cdot \text{id}_{TM_{1}}$. The following cases may occur:\par \noindent
Case 1: $H$ is timelike, i.e., the Lorentzian part of the local normal holonomy representation vanishes.\par \noindent
Case 2: $H \neq 0$ and $H \in NM_{L}$. Then $\|H\|=0$ and
\begin{equation*}
	\lambda \skal{X}{Y} = \skal{A_{H}X}{Y} = \skal{\Pi_{1}(X,Y)}{H},
\end{equation*}
i.e., $\lambda = \skal{H}{H}=0$ and therefore $A_{H}=0$. However, this would imply that $M_{1}$ is not full.\par \noindent
Case 3: $H \notin NM_{L}$ or $H=0$ and moreover $Hol^{\perp}_{L,loc}(M_{1}) \neq 0$ is of type 2 or of type 4. Then $\Xi$ is locally spanned by a $\nabla^{\perp}$-parallel lightlike vector
field $\xi \neq 0$. In the same way as for $H$ we conclude $\nabla^{\perp}(A_{\xi})=0$ and $A_{\xi}=\mu \cdot \text{id}_{TM_{1}}$. Since $\mu = \skal{\xi}{H}=0$ we
derive a contradiction.\par \noindent
Observe that the $Hol^{\perp}_{L}(M)$ is only of type 2 or of type 4 if $Hol^{\perp}_{L,loc}(M_{1})$ is somewhere. Hence, we conclude
\begin{proposition}
	Let $M \subset \R^{1,N}$ be a full spacelike submanifold with $\nabla^{\perp}\Pi=0$. Then the normal holonomy is 
	reducible and if $Hol^{\perp}_{L} \neq 0$ then it is of type 1 or 3.
\end{proposition}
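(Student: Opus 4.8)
The plan is to follow the case analysis carried out above: first use the hypothesis $\nabla^{\perp}\Pi=0$ to force $R^{\perp}$ (equivalently $\mathcal{R}$) to be $\nabla^{\perp}$-parallel, which rules out an irreducible normal holonomy, and then descend to an indecomposable Lorentzian factor and exploit that $\nabla^{\perp}$-parallel normal fields have scalar shape operators there. For the reducibility statement I would argue directly on $M$. From $\nabla^{\perp}\Pi=0$ and the identity $\skal{(\nabla^{\perp}_{X}\Pi)(Y,Z)}{\xi}=\skal{(\nabla_{X}A_{\xi})Y}{Z}-\skal{A_{\nabla^{\perp}_{X}\xi}Y}{Z}$ one gets $\nabla_{X}A_{\xi}=A_{\nabla^{\perp}_{X}\xi}$; plugging this into the Ricci equation $\skal{R^{\perp}(X,Y)\xi}{\eta}=\skal{[A_{\xi},A_{\eta}]X}{Y}$ and into Lemma \ref{olmos-curvature} one checks $\nabla^{\perp}R^{\perp}=0$ and $\nabla^{\perp}\mathcal{R}=0$. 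Hence $\mathcal{R}^{\tau_{\gamma}}=\mathcal{R}$, so $\liealg{hol}^{\perp}_{p}=\mbox{span}\{\mathcal{R}(\xi_{1},\xi_{2})\}$ and $\mathcal{R}$ is invariant under the normal holonomy group. Now suppose the normal holonomy acted irreducibly on the Lorentzian space $N_{p}M$ (necessarily $\dim N_{p}M\geq 2$). The holonomy-invariant splitting $NM=NM_{0}\oplus NM_{L}\oplus NM_{1}\oplus\ldots\oplus NM_{r}$ then forces $NM=NM_{L}$ with $Hol^{\perp}_{L}$ acting irreducibly, so by \cite{MR1836778} we get $Hol^{\perp}_{L}=SO_{0}(1,n)$, $n\geq 2$. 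An $SO_{0}(1,n)$-invariant algebraic curvature tensor must be a multiple $c\mathcal{R}_{0}$ of the constant-curvature tensor $\mathcal{R}_{0}(\xi,\eta)\zeta=\skal{\eta}{\zeta}\xi-\skal{\xi}{\zeta}\eta$; on the other hand Lemma \ref{olmos-curvature} gives $\skal{\mathcal{R}(\xi,\eta)\eta}{\xi}=-\tfrac{1}{2}\{[A_{\xi},A_{\eta}],[A_{\xi},A_{\eta}]\}\leq 0$ for all $\xi,\eta\in N_{p}M$, since $M$ spacelike makes $\{A,B\}=Tr(A\circ B^{T})$ positive definite on skew operators. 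Testing $c\mathcal{R}_{0}$ on a spacelike orthonormal pair yields $c\leq 0$, on a timelike$/$spacelike orthonormal pair yields $-c\leq 0$; so $c=0$, $\mathcal{R}=0$, $\liealg{hol}^{\perp}=0$, contradicting irreducibility. Therefore the normal holonomy is reducible.

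For the second assertion, assume $Hol^{\perp}_{L}\neq 0$; by Theorem \ref{BBIthm} it is of type $1$, $2$, $3$ or $4$, and types $2$ and $4$ are precisely those for which the isotropic line $\Xi\subset NM_{L}$ carries a $\nabla^{\perp}$-parallel lightlike section $\xi\neq 0$ — in types $1$ and $3$ the block acting on $\Xi$ is all of $\R$, resp.\ $\varphi(A)$, so no nonzero parallel section exists, whereas in types $2$ and $4$ this block vanishes. It suffices to reach a contradiction locally on the indecomposable Lorentzian factor $M_{1}\to\R^{1,n}$ furnished by \cite{MR0367876}[Lemma 1], \cite{kath-2008}[Lemma 3.2] and Moore's Lemma, since $Hol^{\perp}_{L}(M)$ is of type $2$ or $4$ only if $Hol^{\perp}_{L,loc}(M_{1})$ is so somewhere. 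Exactly as for the mean curvature vector $H$, the parallel field $\xi$ gives $\nabla A_{\xi}=0$, and indecomposability of $M_{1}$ forces $A_{\xi}|_{TM_{1}}=\mu\cdot\text{id}_{TM_{1}}$ with $\mu=\skal{H}{\xi}$ (take traces). The $NM_{L}$-component of the $\nabla^{\perp}$-parallel field $H$ is a parallel section of $NM_{L}$, hence — for types $2$ and $4$, where the holonomy-fixed vectors of $NM_{L}$ are exactly $\Xi$ — takes values in $\Xi$; since $\Xi$ is isotropic, $\skal{H}{\xi}=0$ and so $A_{\xi}=0$ on $TM_{1}$. Then $\skal{\Pi_{1}(X,Y)}{\xi}=0$ for all $X,Y$, i.e.\ $\xi$ is orthogonal to the first normal space of $M_{1}$, which equals the whole normal space of $M_{1}$ because $M_{1}$ is full; hence $\xi=0$, a contradiction. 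Thus $Hol^{\perp}_{L}$ is of type $1$ or $3$.

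The step I expect to be the main obstacle is the structural input shared by both parts: verifying that $\nabla^{\perp}\Pi=0$ really does make $\mathcal{R}$ (equivalently $R^{\perp}$) $\nabla^{\perp}$-parallel, and that the local product decomposition of $M$ may be arranged so that $M_{1}$ is indecomposable and every $\nabla^{\perp}$-parallel self-adjoint shape operator restricts to a scalar operator on $TM_{1}$ — this is where fullness, \cite{MR0367876}[Lemma 1], \cite{kath-2008}[Lemma 3.2] and Moore's Lemma must be combined with some care. Granted this, the remaining arguments (an invariant-tensor computation in the first part, a trace-and-fullness argument in the second) are short and purely algebraic.
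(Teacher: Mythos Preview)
Your treatment of the second assertion (excluding types 2 and 4) is essentially the paper's own argument: both descend, via the eigenspace decomposition of $A_{H}$ together with \cite{MR0367876}[Lemma 1], \cite{kath-2008}[Lemma 3.2] and Moore's Lemma, to the full Lorentzian factor $M_{1}\to\R^{1,n}$ on which every $\nabla^{\perp}$-parallel shape operator is a scalar; both then take the parallel lightlike section $\xi$ afforded by types 2 and 4, get $A_{\xi}=\mu\cdot\text{id}_{TM_{1}}$ with $\mu=\skal{\xi}{H}=0$, and contradict fullness. You are simply more explicit than the paper about \emph{why} $\skal{\xi}{H}=0$ (the $NM_{L}$-component of $H$ is parallel, hence lies in the fixed set $\Xi$, which is isotropic), but the skeleton is identical.

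For reducibility, however, you take a genuinely different route. The paper's discussion relies entirely on the mean curvature vector: $H$ is $\nabla^{\perp}$-parallel, and its position relative to the splitting $NM_{0}\oplus NM_{L}\oplus\ldots$ drives a case analysis (Cases 1--3). You instead observe that $\nabla^{\perp}\Pi=0$ forces $\nabla^{\perp}\mathcal{R}=0$, so the algebraic curvature tensor $\mathcal{R}$ of Lemma \ref{olmos-curvature} is invariant under the full normal holonomy; if that holonomy were the irreducible $SO_{0}(1,n)$, $\mathcal{R}$ would be a multiple of the constant-curvature model, and the sign constraint $\skal{\mathcal{R}(\xi,\eta)\eta}{\xi}=-\tfrac{1}{2}\{[A_{\xi},A_{\eta}],[A_{\xi},A_{\eta}]\}\leq 0$ forces that multiple to vanish, whence $\liealg{hol}^{\perp}=0$. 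This buys you two things the paper's argument does not: it is self-contained (no product decomposition is needed for this step), and it disposes cleanly of the case $H=0$, which the paper's case distinction does not explicitly address. Conversely, the paper's route has the virtue of reusing machinery already set up for the second assertion, so no new ingredient is introduced.
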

I do not know if $M$ can have a normal holonomy representation of type 1 or 3.
\subsection{Submanifolds of the light cone}\hfill \par
For any spacelike submanifold $M \subset \R^{1,N}$ with $Hol^{\perp}_{L} \neq 0$ the distribution $\Xi$ is locally spanned
by a $\nabla^{\perp}$-parallel lightlike vector field $\xi$ if $Hol^{\perp}_{L}$ is of type 2 or 4. Now we assume that
$\xi$ is globally defined on $M$ and moreover $A_{\xi}=\lambda \cdot \text{id}_{TM}$ for some $\lambda \neq 0$.
Consider the position vector field $V: \R^{1,N} \rightarrow \R^{1,N}$ with $V(p)=p$. Then
$\nabla^{\R^{1,N}}_{\cdot}V = \text{id}_{\R^{1,N}}$ and therefore
\begin{equation*}
	\nabla^{\R^{1,N}}_{X}{(\xi + \lambda V)} =0 \quad \text{for } X \in TM.
\end{equation*}
Hence, we can find $V_{0} \in \R^{1,N}$ such that $V = V_{0} -\frac{1}{\lambda}\xi$, i.e., after a translation by $V_{0}$
we have $M \subset L^{N}$. Here $L^{N}$ is the light cone in $\R^{1,N}$. Conversely, let $M \subset L^{N}$ and $V$ be the
restriction of the position vector field to $M$. Then $V \in \Gamma(NM)$ and
\begin{equation*}
	\skal{V}{V}=0, \quad \nabla^{\perp}V=0, \quad A_{V} = -\text{id}_{TM}.
\end{equation*}
This follows from $\nabla^{\R^{1,N}}_{\cdot}V = \text{id}_{\R^{1,N}}$ and
$0 = X(\skal{V}{V}) = 2\skal{\nabla^{\R^{1,N}}_{X}V}{V}=2\skal{X}{V}$ for $X \in TM$.\par
Finally, if $M \subset L^{N}$ has codimension $3$ in $\R^{1,N}$ the normal holonomy is either vanishing or of type 2,
since $V$ is a lightlike $\nabla^{\perp}$-parallel vector field, i.e., $1\times SO(2)$, $SO(1,1)\times 1$ and
$SO_{0}(1,2)$ cannot be the normal holonomy group.
%
%
\bibliographystyle{amsalpha}
\nocite{*}
\bibliography{normalhol2-biblio}
\end{document}